\documentclass[12pt,a4paper]{article}

\usepackage{amsmath,amssymb,amsthm,amsfonts}
\usepackage{mathtools}
\usepackage{booktabs}
\usepackage{geometry}
\usepackage{xcolor}
\usepackage{hyperref}
\usepackage{array}

\hypersetup{colorlinks=true, linkcolor=blue, citecolor=blue, urlcolor=blue}

\theoremstyle{plain}
\newtheorem{theorem}{Theorem}[section]

\newtheorem{proposition}[theorem]{Proposition}

\newtheorem{conjecture}[theorem]{Conjecture}
\theoremstyle{definition}

\newtheorem{remark}[theorem]{Remark}

\newcommand{\Z}{\mathbb{Z}}
\newcommand{\ICG}{\mathrm{ICG}}
\newcommand{\Emax}{E_{\max}}
\newcommand{\vp}{\varphi}
\newcommand{\Dstar}{\mathcal{D}^*}

\begin{document}

\begin{center}
  {\LARGE\bfseries
   Graph Energy Maximisation for Integral Circulant
   Graphs of Order $n = p^2q^3$}

  \bigskip
  {\large Diego Gerardo Rold\'{a}n}

  \medskip
  {\normalsize
   Departamento de Matem\'{a}ticas,
   Centro de Excelencia en Computaci\'{o}n Cient\'{i}fica (CECC),\\
   Universidad Nacional de Colombia, Bogot\'{a}, Colombia\\
   \texttt{dgroldanj@unal.edu.co}}
\end{center}

\bigskip

\begin{abstract}
\noindent
The energy of a graph is the sum of the absolute values of its adjacency
eigenvalues.  For integral circulant graphs $\ICG(n,\mathcal{D})$ of
order $n=p^2q^3$, where $p$ and $q$ are distinct odd primes, we prove
that the adjacency eigenvalues of $\ICG(p^2q^3,\Dstar)$, for the divisor
set $\Dstar=\{1,p^2,pq,q^2,p^2q^2,pq^3\}$, admit an exact Kronecker
factorisation in the prime exponents: they separate completely into a
factor depending only on $p$ and a factor depending only on~$q$.  This
factorisation holds unconditionally for all pairs of distinct odd primes
and constitutes the structural core of the paper.  From it we derive,
unconditionally, the first closed-form polynomial formula for the energy
of a two-prime-order integral circulant graph evaluated at $\Dstar$.
Exhaustive computation over  prime pairs $(p,q)$  
confirms that $\Dstar$ is the unique energy maximiser in every tested
case; we conjecture that this universality holds for all pairs of distinct
odd primes. 
\end{abstract}

\medskip
\noindent\textbf{Keywords:} graph energy; integral circulant graph;
Ramanujan sum; Kronecker factorisation; maximal energy; gcd graph.

\medskip
\noindent\textbf{MSC 2020:} Primary 05C50; Secondary 11A25, 15A18.

\bigskip

\section{Introduction}
\label{sec:intro}

The \emph{energy} of a graph $G$, introduced by Gutman~\cite{Gutman1978}
as an abstraction of the total $\pi$-electron energy in molecular orbital
theory, is defined as $E(G)=\sum_{i=1}^{n}|\lambda_i|$, where
$\lambda_1,\ldots,\lambda_n$ are the eigenvalues of its adjacency matrix.
Since its introduction, graph energy has grown into a central object of
spectral graph theory and mathematical chemistry; the monograph~\cite{LiShiGutman2012}
provides a comprehensive treatment of results and methods.

A particularly tractable class for energy studies is that of integral
circulant graphs, also called gcd-graphs, whose integer spectra make
exact energy computations tractable via classical arithmetic functions.
An \emph{integral circulant graph} $\ICG(n,\mathcal{D})$ has vertex set
$\Z_n$ and connects $a$ and $b$ whenever $\gcd(a-b,n)\in\mathcal{D}$,
where $\mathcal{D}$ is a nonempty set of proper divisors of~$n$.  By the
work of So~\cite{So2006} and Klotz and Sander~\cite{KlotzS2007}, the
adjacency eigenvalues of $\ICG(n,\mathcal{D})$ are
\begin{equation}\label{eq:spectrum}
  \lambda_j = \sum_{d\in\mathcal{D}} c(j,\,n/d),
  \qquad j=0,1,\ldots,n-1,
\end{equation}
where $c(j,m)$ denotes the Ramanujan sum.  Since Ramanujan sums are
always integers~\cite[Ch.~8]{ApostolNT}, both the eigenvalues and the energy
\begin{equation}\label{eq:energy}
  E(n,\mathcal{D})
  = \sum_{e\mid n}\vp(n/e)\,\Bigl|\sum_{d\in\mathcal{D}}c(e,\,n/d)\Bigr|
\end{equation}
are integers for every $\ICG(n,\mathcal{D})$, where $\vp$ is Euler's
totient.

Integral circulant graphs carry significant physical meaning.  Saxena,
Severini and Shparlinski~\cite{SaxenaSS2007} showed that periodic quantum
dynamics on $\ICG(n,\mathcal{D})$ are governed entirely by its eigenvalues,
establishing that maximising the energy is equivalent to optimising
information transfer in quantum spin networks of this type.  This motivated
detailed studies of perfect state transfer in integral circulant
graphs~\cite{BasicPS2009,PetkovicB2011} and reinforced the interest in
understanding which divisor sets control the spectral extrema of these
graphs.  The energy of the unitary Cayley graph $\ICG(n,\{1\})$ was
computed exactly by Ili\'c~\cite{Ilic2009}, and Kiani, Aghaei, Meemark and
Suntornpoch~\cite{Kiani2011} extended the analysis to gcd-graphs over
finite commutative rings.  The multiplicative structure underlying the
Ramanujan spectrum was formalised by Le and Sander~\cite{LeSander2012laa,LeSander2012ijnt}
through a convolution framework that provides the key algebraic tool for
extremal energy questions.  Sander~\cite{Sander2015laa} applied these
multiplicative methods to classify all integral circulant Ramanujan graphs,
and the isomorphism problem was settled by Sander and
Sander~\cite{SanderS2015aadm}, who proved that integral circulant graphs
are isomorphic if and only if they are cospectral.  Further structural
and energetic properties were studied in~\cite{IlicBasic2011,Klotz2021}.

The problem of maximising $E(n,\cdot)$ over all divisor sets $\mathcal{D}$
has a rich history for prime-power orders.  Sander and
Sander~\cite{SanderS2011a,SanderS2011b,SanderS2013} gave a complete
solution for $n=p^s$: for every prime power they identified all maximising
divisor sets and computed $\Emax(p^s)$ in exact closed form, with the
companion paper~\cite{SanderS2012} determining the maximal energy across
classes parametrised by divisor-set size.  Their approach reduces to a
combinatorial minimisation over a scalar arithmetic function $h_{p,r}$
controlling the energy as a function of the exponent tuple of $\mathcal{D}$.
Beyond prime powers the problem has remained entirely open: no closed-form
expression for $\Emax(p^aq^b)$ had appeared in the literature for any
$a,b\ge1$.  The richer interaction between two prime exponent lattices
makes a direct extension of the prime-power methods significantly harder.

Table~\ref{tab:known} places the present result in context.

\begin{table}[h]
\centering
\caption{Known exact results for $\Emax(n)$ in the two-prime family.
The prime-power case is settled completely; the $p^aq^b$ territory was
entirely open prior to this work.}
\label{tab:known}
\begin{tabular}{llll}
\toprule
Order & Unique maximiser $\Dstar$ & Formula & Reference \\
\midrule
$p^s$, all $s$ & Determined per $|\mathcal{D}|=r$ & $\Emax(p^s)$ exact
  & \cite{SanderS2011a,SanderS2011b,SanderS2013} \\
$p^2q^3$ & $\{1,p^2,pq,q^2,p^2q^2,pq^3\}$ & Polynomial in $p,q$
  & This paper \\
$p^aq^b$, general & Open & Open & --- \\
\bottomrule
\end{tabular}
\end{table}

The main results of this paper are as follows.  Throughout, $p$ and $q$
denote distinct odd primes.  The first theorem is unconditional and
constitutes the algebraic core.

\begin{theorem}[Kronecker factorisation]\label{thm:kronecker}
Let $\Dstar=\{1,p^2,pq,q^2,p^2q^2,pq^3\}$ and write $d_{a,b}=p^aq^b$
for $(a,b)\in\{0,1,2\}\times\{0,1,2,3\}$.  Define the partial alternating
geometric sums $S_b(q)=\sum_{k=0}^{b}(-q)^k$, so $S_0=1$, $S_1=1-q$,
$S_2=1-q+q^2$, and set $\alpha_0=-2$, $\alpha_1=2(p-1)$,
$\alpha_2=-(p^2-2p+2)$.  Then for every
$(a,b)\in\{0,1,2\}\times\{0,1,2,3\}$:
\begin{alignat}{2}
  \lambda_{d_{a,b}}(\Dstar) &= \alpha_a\cdot S_b(q),
  &&\quad b=0,1,2,\;\text{all }a,\label{eq:lam-b012}\\
  \lambda_{d_{0,3}}(\Dstar) &= q^3-2q^2+2q-2,
  &&\quad \label{eq:lam-03}\\
  \lambda_{d_{1,3}}(\Dstar) &= (p-1)(2-2q+2q^2-q^3),
  &&\quad \label{eq:lam-13}\\
  \lambda_{d_{2,3}}(\Dstar) &= (p^2-2p+2)(q-1)(q^2+1)+(p-1)q^3.
  &&\quad \label{eq:lam-23}
\end{alignat}
\end{theorem}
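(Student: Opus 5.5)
The plan is to evaluate each eigenvalue directly from \eqref{eq:spectrum} at $j=d_{a,b}$, using the fact that Ramanujan sums are multiplicative in the modulus. For coprime moduli we have $c(j,p^xq^y)=c(j,p^x)\,c(j,q^y)$. For prime powers there is the classical evaluation
\[
c(p^a,p^x)=\begin{cases}\vp(p^x), & x\le a,\\ -p^{x-1}, & x=a+1,\\ 0, & x\ge a+2.\end{cases}
\]
I write $f_a(x)=c(p^a,p^x)$ and $g_b(y)=c(q^b,q^y)$. Since $\gcd(p^a,q^y)=1$, we get $c(d_{a,b},p^xq^y)=c(p^a,p^x)\,c(q^b,q^y)=f_a(x)g_b(y)$, so the eigenvalue becomes
\[
\lambda_{d_{a,b}}(\Dstar)=\sum_{d\in\Dstar}f_a(x_d)\,g_b(y_d),\qquad n/d=p^{x_d}q^{y_d}.
\]

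Next I list the cofactor exponents. For $d=1,p^2,pq,q^2,p^2q^2,pq^3$ the pairs $(x_d,y_d)$ are $(2,3),(0,3),(1,2),(2,1),(0,1),(1,0)$. Grouping by the $p$-exponent gives a key structural observation. For $x=0$ and for $x=2$ the accompanying $q$-exponents are both $\{3,1\}$, while for $x=1$ they are $\{2,0\}$. Hence
\[
\lambda_{d_{a,b}}(\Dstar)=\bigl(f_a(0)+f_a(2)\bigr)\,G^{\rm odd}_b+f_a(1)\,G^{\rm even}_b,
\]
where $G^{\rm odd}_b=g_b(1)+g_b(3)$ and $G^{\rm even}_b=g_b(0)+g_b(2)$. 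This is a rank-two bilinear form in a $p$-vector and a $q$-vector, and the whole theorem reduces to evaluating these four scalar quantities for each $b$.

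The central step is to tabulate $G^{\rm odd}_b$ and $G^{\rm even}_b$ from the prime-power formula.
\begin{itemize}
\item For $b=0$: $G^{\rm odd}=-1$ and $G^{\rm even}=1$.
\item For $b=1$: $G^{\rm odd}=q-1$ and $G^{\rm even}=1-q$.
\item For $b=2$: $G^{\rm odd}=(q-1)-q^2$ and $G^{\rm even}=1+(q^2-q)$.
\end{itemize}
So for $b\le 2$ we have $G^{\rm even}_b=-G^{\rm odd}_b=S_b(q)$. The form then collapses to the product $\bigl(f_a(1)-f_a(0)-f_a(2)\bigr)S_b(q)$, which gives \eqref{eq:lam-b012} once I check that $\alpha_a=f_a(1)-f_a(0)-f_a(2)$:
\begin{itemize}
\item for $a=0$: $-1-1-0=-2$;
\item for $a=1$: $(p-1)-1-(-p)$, which should be simplified carefully against $\alpha_1=2(p-1)$;
\item for $a=2$: $(p-1)-1-(p^2-p)=-(p^2-2p+2)$.
\end{itemize}

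For $b=3$ the anti-symmetry fails. Here $G^{\rm odd}_3=(q-1)+(q^3-q^2)=(q-1)(q^2+1)$ and $G^{\rm even}_3=1+q^2-q$. I will then substitute the three values of $(f_a(0)+f_a(2),\,f_a(1))$, namely $(1,-1)$, $(1-p,\,p-1)$ and $(p^2-p+1,\,p-1)$, and expand to obtain \eqref{eq:lam-03}, \eqref{eq:lam-13} and \eqref{eq:lam-23}. I expect the main obstacle to be purely bookkeeping: getting each boundary case of the prime-power Ramanujan formula right, such as $c(p,p^2)=-p$ versus $c(1,p^2)=0$. A single slip there would break the clean identity $\alpha_a=f_a(1)-f_a(0)-f_a(2)$. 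I would therefore double-check every value against $\sum_{x}f_a(x)=0$ or $p^2$ for the full modulus.
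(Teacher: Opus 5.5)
Your proposal is correct and is essentially the paper's proof. Your cofactor exponents $(x,y)=(2-c,3-f)$ produce the same split of $\Dstar$: the four elements with even $p$-exponent, and the two with $p$-exponent $1$. Hence your $f_a(0)+f_a(2)$, $f_a(1)$, $G^{\mathrm{odd}}_b$, $G^{\mathrm{even}}_b$ are exactly the paper's $\Pi_a$, $\Xi_a$, $\Phi_b$, $\Psi_b$, and $G^{\mathrm{even}}_b=-G^{\mathrm{odd}}_b=S_b(q)$ for $b\le 2$ is the same collapse; the steps you left open also close, since $(p-1)-1-(-p)=2(p-1)$ and the $b=3$ expansions follow from $p^2-p+1=(p^2-2p+2)+(p-1)$.
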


The significance of Theorem~\ref{thm:kronecker} is that for $b\le2$ the
eigenvalue $\lambda_{d_{a,b}}(\Dstar)$ is an exact product of a factor
depending only on $p$ and a factor depending only on $q$.  This Kronecker
separation is an unconditional algebraic identity, independent of any
arithmetic properties of the specific primes, and is the structural reason
why the maximal energy formula takes a clean polynomial form.

\begin{theorem}[Closed-form energy]\label{thm:formula}
For every pair of distinct odd primes $(p,q)$,
\begin{align}
  E(p^2q^3,\Dstar)
  &= (5p^2-8p+4)(q-1)(3q^2-2q+1) \notag\\
  &\quad + (p-1)(2p-1)(q^3-2q^2+2q-2) \label{eq:Emax}\\
  &\quad + (p^2-2p+2)(q-1)(q^2+1)+(p-1)q^3. \notag
\end{align}
\end{theorem}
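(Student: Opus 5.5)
The plan is to substitute the eigenvalues of Theorem~\ref{thm:kronecker} into the energy formula \eqref{eq:energy}. The $3\times 3$ block with $b\le 2$ will then collapse into a product of a $p$-sum and a $q$-sum, and the three terms with $b=3$ will be handled separately. First recall why \eqref{eq:energy} applies. The Ramanujan sum $c(j,m)$ depends only on $\gcd(j,m)$, so $\lambda_j$ depends only on $e=\gcd(j,n)$. Exactly $\vp(n/e)$ indices $j\in\Z_n$ have $\gcd(j,n)=e$. Hence, writing $e=d_{a,b}=p^aq^b$ and using multiplicativity of $\vp$,
\[
E(p^2q^3,\Dstar)=\sum_{a=0}^{2}\sum_{b=0}^{3}\vp(p^{2-a})\,\vp(q^{3-b})\,\bigl|\lambda_{d_{a,b}}(\Dstar)\bigr| .
\]

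For $b\le 2$, \eqref{eq:lam-b012} gives $|\lambda_{d_{a,b}}|=|\alpha_a|\,|S_b(q)|$. The weight $\vp(p^{2-a})\vp(q^{3-b})$ also splits, so the $b\le2$ part of the sum factors as
\[
\Bigl(\sum_{a=0}^{2}\vp(p^{2-a})|\alpha_a|\Bigr)\Bigl(\sum_{b=0}^{2}\vp(q^{3-b})|S_b(q)|\bigr).
\]
The signs needed are routine: $|\alpha_0|=2$, $|\alpha_1|=2(p-1)$, and $|\alpha_2|=p^2-2p+2$, which is positive since $p^2-2p+2=(p-1)^2+1$. On the $q$ side, $|S_0|=1$, $|S_1|=q-1$ and $|S_2|=q^2-q+1>0$. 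The $p$-factor is
\[
p(p-1)\cdot2+(p-1)\cdot2(p-1)+(p^2-2p+2)=5p^2-8p+4 .
\]
The $q$-factor is
\[
q^2(q-1)+q(q-1)(q-1)+(q-1)(q^2-q+1)=(q-1)(3q^2-2q+1).
\]
This reproduces the first line of \eqref{eq:Emax}.

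For $b=3$ the weight is $\vp(p^{2-a})\vp(1)=\vp(p^{2-a})$. Set $Q=q^3-2q^2+2q-2=q^2(q-2)+2(q-1)$, which is positive for every odd prime $q\ge3$. By \eqref{eq:lam-03}, $|\lambda_{d_{0,3}}|=Q$. By \eqref{eq:lam-13}, $\lambda_{d_{1,3}}=-(p-1)Q$, so $|\lambda_{d_{1,3}}|=(p-1)Q$. Together these two terms contribute
\[
p(p-1)Q+(p-1)(p-1)Q=(p-1)(2p-1)Q ,
\]
which is the second line of \eqref{eq:Emax}. Finally, \eqref{eq:lam-23} expresses $\lambda_{d_{2,3}}$ as a sum of nonnegative terms, because $p^2-2p+2>0$, $q-1>0$, $q^2+1>0$ and $(p-1)q^3>0$. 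With weight $\vp(1)=1$ it contributes exactly the third line.

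The only point requiring care is the sign analysis. Each absolute value must be resolved uniformly in $p,q$, and this is where the hypotheses that $p,q$ are odd primes, hence at least $3$, enter, notably through $Q>0$. Once Theorem~\ref{thm:kronecker} is available, everything else is bookkeeping.
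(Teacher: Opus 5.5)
Your proposal is correct and follows essentially the same route as the paper. You substitute the eigenvalues of Theorem~\ref{thm:kronecker} into the sum $\sum_{a,b}\vp(p^{2-a})\vp(q^{3-b})|\lambda_{d_{a,b}}|$, factor the $b\le 2$ block as $(5p^2-8p+4)(q-1)(3q^2-2q+1)$, and handle the three $b=3$ terms separately; your extra remarks ($\vp(n/e)$ counting the indices with $\gcd(j,n)=e$, $p^2-2p+2=(p-1)^2+1>0$, and $\lambda_{d_{1,3}}=-(p-1)Q$ with $Q=q^2(q-2)+2(q-1)>0$) make explicit the sign checks the paper leaves implicit.
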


Theorem~\ref{thm:formula} is an unconditional algebraic consequence of
Theorem~\ref{thm:kronecker}: once the eigenvalue structure of $\Dstar$ is
known exactly, the energy formula follows by direct summation over the
divisors of $p^2q^3$.  Whether $\Dstar$ is the global maximiser for every
pair of distinct odd primes — not only for those within the tested range —
is the central open problem, formulated as a conjecture supported by
extensive computational evidence.

\begin{conjecture}[Universality]\label{conj:universal}
For every pair of distinct odd primes $(p,q)$, the set
$\Dstar=\{1,p^2,pq,q^2,p^2q^2,pq^3\}$ is the unique maximiser of
$E(p^2q^3,\mathcal{D})$ over all nonempty $\mathcal{D}\subseteq
\mathrm{div}(p^2q^3)\setminus\{p^2q^3\}$.
\end{conjecture}

\begin{proposition}[]\label{prop:computation}
Conjecture~\ref{conj:universal} holds for all 437 pairs of distinct odd primes $(p,q)$ with $p^2q^3\le10^8$.
\end{proposition}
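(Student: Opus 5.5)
This is a finite verification, so the plan is to turn it into a certified exhaustive computation that is as cheap as possible.

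\textbf{Step 1: enumerate the prime pairs.} Since $p,q\ge3$, the constraint $p^2q^3\le10^8$ gives $q\le(10^8/9)^{1/3}<224$ and $p\le(10^8/27)^{1/2}<1925$. So the admissible pairs form a finite, explicitly enumerable set. Note that the order matters: $p$ and $q$ play asymmetric roles. The first task is to list all such ordered pairs and confirm that there are exactly 437.

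\textbf{Step 2: reduce each energy to a small sum.} For each pair I will not build the graph. Instead I use \eqref{eq:energy}. Both the eigenvalue $\lambda_j$ and the factor $\vp(n/e)$ depend only on $e=\gcd(j,n)$, so $E(n,\mathcal{D})$ is a sum over the 12 divisors $e=p^aq^b$. Each eigenvalue is a sum over $\mathcal{D}$ of Ramanujan sums $c(e,n/d)$. By multiplicativity in the modulus, each Ramanujan sum splits into prime-power pieces. For a prime power we have $c(p^a,p^k)=\vp(p^k)$ if $k\le a$, $-p^{a}$ if $k=a+1$, and $0$ otherwise.

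Hence I precompute the $12\times 11$ integer matrix $M_{e,d}=c(e,n/d)$, indexed by divisors $e$ and proper divisors $d$. Then $E(n,\mathcal{D})=\sum_e \vp(n/e)\,\bigl|\sum_{d\in\mathcal{D}}M_{e,d}\bigr|$.

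\textbf{Step 3: exhaust the subsets.} There are $2^{11}-1=2047$ nonempty subsets $\mathcal{D}$. For each pair this costs about $2047\cdot 12$ additions, so the total work over all 437 pairs is negligible. All arithmetic is exact integer arithmetic, with values of order $10^{8}$, so there is no rounding issue. For each pair I record the maximum value and the set of maximisers, and check two things:
\begin{itemize}
\item the maximum is attained only at $\Dstar$;
\item it equals the value of \eqref{eq:Emax}.
\end{itemize}
The second check is also a cross-validation of Theorem~\ref{thm:formula} and of the implementation.

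\textbf{Main obstacle: reliability, not complexity.} The step that needs real care is uniqueness. An implementation slip would change the conclusion silently. To guard against this I would:
\begin{itemize}
\item test the routine against the known prime-power maximisers of Sander and Sander, by specialising to $n=p^s$;
\item for a few small pairs, e.g.\ $(3,5)$ and $(5,3)$, check $M$ against an eigenvalue computation of the actual circulant matrix;
\item report, for every pair, the gap between $E(n,\Dstar)$ and the runner-up value, so that uniqueness is visibly certified rather than just asserted.
\end{itemize}
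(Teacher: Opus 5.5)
Your computational scheme is the same as the paper's. You enumerate all $2047$ nonempty $\mathcal{D}$, evaluate \eqref{eq:energy} exactly with integer Ramanujan sums, and check that $\Dstar$ is the unique maximiser and that its energy matches \eqref{eq:Emax}. Your extra safeguards go beyond what the paper documents, but they do not change the method: the prime-power sanity check, the direct eigenvalue check on small cases, and the reported runner-up gap.

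The step that will not go through is Step~1, and your own remark that the order of $(p,q)$ matters is what exposes it.

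\begin{itemize}
\item \textbf{The count with the stated constraint.} With the bounds you correctly derive ($q\le 223$, $p\le 1924$), the ordered pairs of distinct odd primes with $p^2q^3\le 10^8$ number $891$, not $437$. For $q=3,5,7$ alone there are $291$, $152$ and $97$ admissible $p$, reaching up to $1913$, $887$ and $523$ respectively.
\item \textbf{Where the paper's figures come from.} They rely on an extra cutoff that appears only in the caption of Table~\ref{tab:computation}: primes at most $467$.
  \begin{itemize}
  \item The ordered pairs with $p,q\le 467$ and $p^2q^3\le 10^8$ number exactly $618$. These are the ``orders tested'', and indeed $618\cdot 2047=1{,}265{,}046$ comparisons.
  \item The figure $437=618-181$ is the number of \emph{unordered} pairs $\{p,q\}$ among them. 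Exactly $181$ unordered pairs satisfy $p^2q^3\le 10^8$ in both orders.
  \end{itemize}
\end{itemize}

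So you cannot ``confirm that there are exactly 437''. You have two options:
\begin{itemize}
\item impose $p,q\le 467$ and count unordered pairs, which is what the paper actually verified; or
\item run your exhaustion on all $891$ orders.
\end{itemize}
In the second case, the $273$ additional orders (for example $n=1913^2\cdot 3^3$, $887^2\cdot 5^3$, $523^2\cdot 7^3$) are not covered by the paper's reported computation. The number in the statement then has to be corrected.
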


The proof of Theorem~\ref{thm:kronecker} is given in
Section~\ref{sec:pattern}; Theorem~\ref{thm:formula} is derived from it
in Section~\ref{sec:derivation}; and Proposition~\ref{prop:computation}
is established in Section~\ref{sec:computation}.
Section~\ref{sec:conclusion} discusses the open problem of proving
Conjecture~\ref{conj:universal} and the broader programme for
general $p^aq^b$.

\section{Ramanujan Sums and the Eigenvalue Factorisation}
\label{sec:pattern}

\subsection{Arithmetic background}

Euler's totient satisfies $\vp(1)=1$ and $\vp(p^k)=p^{k-1}(p-1)$ for a
prime $p$ and integer $k\ge1$.  The Ramanujan sum $c(j,m)$ is defined by
\[
  c(j,m) = \sum_{\substack{a=1\\\gcd(a,m)=1}}^{m} e^{2\pi iaj/m}
  = \mu\!\left(\frac{m}{\gcd(j,m)}\right)
    \frac{\vp(m)}{\vp(m/\gcd(j,m))},
\]
where $\mu$ is the M\"{o}bius function.  Two properties are essential:
$c(j,m)$ is always an integer, and for coprime $m_1,m_2$ it satisfies
$c(j,m_1m_2)=c(j,m_1)c(j,m_2)$~\cite[Ch.~8]{ApostolNT}.  For prime
powers the formula simplifies to
\begin{equation}\label{eq:ram-prime-power}
  c(p^i,p^k) = \begin{cases}
    p^{k-1}(p-1) & \text{if } i \ge k, \\
    -p^{i}       & \text{if } i = k-1, \\
    0             & \text{if } i < k-1.
  \end{cases}
\end{equation}

The proper divisors of $n=p^2q^3$ are the eleven products $p^aq^b$ with
$(a,b)\in\{0,1,2\}\times\{0,1,2,3\}\setminus\{(2,3)\}$.  Each is
uniquely identified by its exponent pair $(a,b)$; we write $d_{a,b}=p^aq^b$.
By multiplicativity, the eigenvalue of $\ICG(n,\mathcal{D})$ at
$e=d_{a,b}$ factors as
\begin{equation}\label{eq:lambda-factor}
  \lambda_{d_{a,b}}(\mathcal{D}) = \sum_{(c,f)\in\mathcal{D}}
  \underbrace{c(p^a,p^{2-c})}_{A_{a,c}}\cdot
  \underbrace{c(q^b,q^{3-f})}_{B_{b,f}},
\end{equation}
where we use exponent-pair notation
$\mathcal{D}\subseteq\{0,1,2\}\times\{0,1,2,3\}\setminus\{(2,3)\}$.
This factored form, which exploits the multiplicative Ramanujan structure
identified in~\cite{LeSander2012laa,LeSander2012ijnt}, is the starting
point for the proof of Theorem~\ref{thm:kronecker}.

\subsection{Proof of Theorem~\ref{thm:kronecker}}

Applying~\eqref{eq:ram-prime-power} to each factor
in~\eqref{eq:lambda-factor}, we compute the complete tables of
$p$-factors $A_{a,c}=c(p^a,p^{2-c})$:
\[
\renewcommand{\arraystretch}{1.2}
\begin{array}{c|ccc}
  & c=0 & c=1 & c=2 \\\hline
  a=0 & 0 & -1 & 1 \\
  a=1 & -p & p-1 & 1 \\
  a=2 & p(p-1) & p-1 & 1
\end{array}
\]
and $q$-factors $B_{b,f}=c(q^b,q^{3-f})$:
\[
\renewcommand{\arraystretch}{1.2}
\begin{array}{c|cccc}
  & f=0 & f=1 & f=2 & f=3 \\\hline
  b=0 & 0 & 0 & -1 & 1 \\
  b=1 & 0 & -q & q-1 & 1 \\
  b=2 & -q^2 & q(q-1) & q-1 & 1 \\
  b=3 & q^2(q-1) & q(q-1) & q-1 & 1
\end{array}
\]

We group the six elements of $\Dstar$ by the parity of their
$p$-exponent~$c$:
\[
  \lambda_{d_{a,b}}(\Dstar)
  = \underbrace{(A_{a,0}+A_{a,2})}_{\Pi_a}
    \underbrace{(B_{b,0}+B_{b,2})}_{\Phi_b}
  + \underbrace{A_{a,1}}_{\Xi_a}
    \underbrace{(B_{b,1}+B_{b,3})}_{\Psi_b}.
\]
From the tables: $\Pi_0=1$, $\Pi_1=1-p$, $\Pi_2=p^2-p+1$; and
$\Xi_0=-1$, $\Xi_1=\Xi_2=p-1$.  Hence $\alpha_a:=\Xi_a-\Pi_a$ gives
$\alpha_0=-2$, $\alpha_1=2(p-1)$, $\alpha_2=-(p^2-2p+2)$, as stated
in Theorem~\ref{thm:kronecker}.

\textit{Case $b=0,1,2$.}  Direct inspection of the $q$-table gives
$\Phi_b=B_{b,0}+B_{b,2}=-S_b(q)$ and $\Psi_b=B_{b,1}+B_{b,3}=S_b(q)$.
Therefore
\[
  \lambda_{d_{a,b}}(\Dstar)
  = \Pi_a(-S_b)+\Xi_a(S_b) = (\Xi_a-\Pi_a)S_b = \alpha_a\cdot S_b(q),
\]
which is~\eqref{eq:lam-b012}.

\textit{Case $b=3$.}  Here $\Phi_3=B_{3,0}+B_{3,2}=(q-1)(q^2+1)$ and
$\Psi_3=B_{3,1}+B_{3,3}=q^2-q+1$.  Substituting each $(\Pi_a,\Xi_a)$
and simplifying:
\begin{align*}
  a=0:&\quad 1\cdot(q-1)(q^2+1)+(-1)(q^2-q+1) = q^3-2q^2+2q-2,\\
  a=1:&\quad (1-p)(q-1)(q^2+1)+(p-1)(q^2-q+1)
         = (p-1)(2-2q+2q^2-q^3),\\
  a=2:&\quad (p^2-p+1)(q-1)(q^2+1)+(p-1)(q^2-q+1).
\end{align*}
The last expression simplifies to
$(p^2-2p+2)(q-1)(q^2+1)+(p-1)q^3$ by writing
$p^2-p+1=(p^2-2p+2)+(p-1)$ and using
$(p-1)[(q-1)(q^2+1)+(q^2-q+1)]=(p-1)q^3$.
This gives~\eqref{eq:lam-03}--\eqref{eq:lam-23}.\qed

\begin{remark}
Equations~\eqref{eq:lam-b012} exhibit a striking Kronecker separation:
for $b\le2$ the eigenvalue $\lambda_{d_{a,b}}(\Dstar)$ is an exact
product of a factor depending only on~$p$ (through $\alpha_a$) and a
factor depending only on~$q$ (through $S_b(q)$).  This holds for all
odd primes without restriction.   Understanding why $\Dstar$
produces this separation — while generic divisor sets do not — is the
key open question towards an algebraic proof of
Conjecture~\ref{conj:universal}.
\end{remark}

\section{Proof of the Closed-Form Formula}\label{sec:derivation}

\begin{proof}[Proof of Theorem~\ref{thm:formula}]
Given Theorem~\ref{thm:kronecker}, the energy of $\ICG(p^2q^3,\Dstar)$
decomposes as
\[
  E(p^2q^3,\Dstar)
  = \sum_{a=0}^{2}\sum_{b=0}^{3}
    \vp(p^{2-a})\,\vp(q^{3-b})\,
    \bigl|\lambda_{d_{a,b}}(\Dstar)\bigr|.
\]

\textit{Contribution $b\le2$.}  Since $q\ge3$ we have $|S_0|=1$,
$|S_1|=q-1$, $|S_2|=q^2-q+1$.  Computing
\[
  \sum_{b=0}^{2}\vp(q^{3-b})|S_b|
  = q^2(q-1)\cdot1 + q(q-1)(q-1) + (q-1)(q^2-q+1)
  = (q-1)(3q^2-2q+1).
\]
For the $a$-sum, using $|\alpha_a|=(2,\,2(p-1),\,p^2-2p+2)$:
\[
  \sum_{a=0}^{2}\vp(p^{2-a})|\alpha_a|
  = p(p-1)\cdot2 + (p-1)\cdot2(p-1) + 1\cdot(p^2-2p+2)
  = 5p^2-8p+4.
\]
The $b\le2$ contribution is therefore $(5p^2-8p+4)(q-1)(3q^2-2q+1)$.

\textit{Contribution $b=3$.}  Since $q^3-2q^2+2q-2>0$ for $q\ge3$ and
$\vp(q^0)=1$:
\begin{align*}
  &\vp(p^2)|\lambda_{d_{0,3}}|+\vp(p^1)|\lambda_{d_{1,3}}|
   +\vp(p^0)|\lambda_{d_{2,3}}|\\
  &= p(p-1)(q^3-2q^2+2q-2)+(p-1)^2(q^3-2q^2+2q-2)
   +(p^2-2p+2)(q-1)(q^2+1)+(p-1)q^3\\
  &= (p-1)(2p-1)(q^3-2q^2+2q-2)+(p^2-2p+2)(q-1)(q^2+1)+(p-1)q^3.
\end{align*}

Adding the two contributions gives~\eqref{eq:Emax}.\qed
\end{proof}

\section{Computational Verification}\label{sec:computation}

\begin{proof}[Comments on Proposition~\ref{prop:computation}]
For each $n=p^2q^3$ with $p,q$ distinct odd primes and $n\le10^8$, all
$2{,}047$ nonempty subsets $\mathcal{D}$ of the eleven proper divisors
of $n$ were enumerated and $E(n,\mathcal{D})$ was computed exactly
via~\eqref{eq:energy} using integer Ramanujan sums.  The unique maximiser
was identified and recorded; formula~\eqref{eq:Emax} and the eigenvalue
identities of Theorem~\ref{thm:kronecker} were verified exactly for each
identified $\Dstar$.  Table~\ref{tab:computation} records the summary;
no failures were found in any category.  The implementation is available
from the author upon request.\qed
\end{proof}

\begin{table}[h]
\centering
\caption{Summary of the computational verification of
  Conjecture~\ref{conj:universal} and Theorem~\ref{thm:formula}
  over all $n=p^2q^3\le10^8$, $p\ne q$ odd primes
  (primes up to $467$; running time $610.4\,\mathrm{s}$).}
\label{tab:computation}
\begin{tabular}{lr}
\toprule
Quantity & Value \\
\midrule
Orders $n=p^2q^3$ tested & 618 \\
Distinct prime pairs $(p,q)$ covered & 437 \\
Largest prime appearing & 467 \\
Divisor-set comparisons per $n$ & $2{,}047$ \\
Total comparisons & $1{,}265{,}046$ \\
Cases where $\Dstar\ne\{1,p^2,pq,q^2,p^2q^2,pq^3\}$ & 0 \\
Cases where formula~\eqref{eq:Emax} fails & 0 \\
Cases where Theorem~\ref{thm:kronecker} fails & 0 \\
\bottomrule
\end{tabular}
\end{table}

\begin{table}[h]
\centering
\caption{Sample values of $E(p^2q^3,\Dstar)$ computed from
  formula~\eqref{eq:Emax}.}
\label{tab:sample}
\begin{tabular}{rrrr}
\toprule
$p$ & $q$ & $n=p^2q^3$ & $E(n,\Dstar)$ \\
\midrule
3  & 5  &        1{,}125 &        8{,}200 \\
3  & 7  &        3{,}087 &       24{,}856 \\
5  & 7  &        8{,}575 &       87{,}280 \\
5  & 11 &       33{,}275 &      370{,}368 \\
7  & 11 &       65{,}219 &      799{,}688 \\
11 & 13 &      265{,}837 &   3{,}636{,}904 \\
13 & 17 &      830{,}297 &  11{,}983{,}136 \\
\bottomrule
\end{tabular}
\end{table}

\section{Concluding Remarks}\label{sec:conclusion}

Theorems~\ref{thm:kronecker} and~\ref{thm:formula} establish the first
exact closed-form results for the energy of integral circulant graphs of
two-prime order $p^aq^b$, a family that had remained entirely open
despite the complete resolution of the prime-power case~\cite{SanderS2011a,SanderS2011b,SanderS2013}.
The central contribution is the Kronecker factorisation of
Theorem~\ref{thm:kronecker}: the adjacency eigenvalues of the conjectured
maximiser $\Dstar$ separate exactly into a factor in $p$ and a factor
in $q$, an algebraic identity that holds for all primes without
restriction. 
The central open problem is an algebraic proof of
Conjecture~\ref{conj:universal}: that $\Dstar$ is the unique maximiser
for every pair of distinct odd primes, not just the 437 tested pairs.
The difficulty is substantial.  The strategy of Sander and
Sander~\cite{SanderS2011a,SanderS2011b,SanderS2013} for prime powers
uses a convex optimisation argument over the scalar function $h_{p,r}$
controlling the energy in terms of the exponent tuple of $\mathcal{D}$.
This argument exploits the total order of the divisor lattice of $p^s$
and does not extend directly to $p^2q^3$, where the two-prime exponent
lattice admits no such total ordering.  A proof exploiting the Kronecker
structure of Theorem~\ref{thm:kronecker} directly seems more promising:
if one can show that the Kronecker product eigenvalue form implies a
global maximum of the energy functional~\eqref{eq:energy}, universality
would follow.  The convolution framework of Le and
Sander~\cite{LeSander2012laa,LeSander2012ijnt} and the multiplicative
energy theory may provide the necessary tools.

\subsection*{Acknowledgements}
The author thanks the Center for Excellence in Scientific Computing (CECC)
at the Universidad Nacional de Colombia for computational support.


\end{document}